\definecolor{myblue}{RGB}{41,87,163}
\definecolor{myblue1}{RGB}{50,30,200}
\newtheorem{defi}{Definition}
\newtheorem{lema}{Lemma}
\newtheorem{proposicao}{Proposition}
\newtheorem{theorem}{Theorem}
\def\moverlay{\mathpalette\mov@rlay}
\def\mov@rlay#1#2{\leavevmode\vtop{%
		\baselineskip\z@skip \lineskiplimit-\maxdimen
		\ialign{\hfil$\m@th#1##$\hfil\cr#2\crcr}}}
\newcommand{\charfusion}[3][\mathord]{
	#1{\ifx#1\mathop\vphantom{#2}\fi
		\mathpalette\mov@rlay{#2\cr#3}
	}
	\ifx#1\mathop\expandafter\displaylimits\fi}
\newcommand{\cupdot}{\charfusion[\mathbin]{\cup}{\cdot}}
\title{The Fubini Theorem for Normal Lie Subgroups of Index $2n$}
\author{Leandro Nery de Oliveira\inst{1}, Marcos Aurélio de Alcântara\inst{2}}
\address{Departamento de Matemática -- Universidade Federal de São Carlos
	(UFSCar)\\
	São Carlos -- SP -- Brazil
	\nextinstitute
	Centro de Ciências Exatas e Tecnológicas -- Universidade Federal do Acre (UFAC) \\
	Rio Branco -- AC -- Brazil
	\email{leandro.oliveira@ufscar.br, 	marcos.alcantara@ufac.br}
}
\begin{document} 

\maketitle

\begin{abstract}
  Let $\Gamma_+$ be a normal subgroup of index $2n$ of a group $\Gamma$ and $\gamma_i \in \Gamma \setminus \Gamma_+$ be involutions. We first prove that if $\Gamma = \Gamma_+ \rtimes (\mathbb{Z}_2(\gamma_1) \times \cdots \times \mathbb{Z}_2(\gamma_n))$ then $\Gamma = (\Gamma_+ \rtimes \mathbb{Z}_2(\gamma_1) \rtimes \cdots \rtimes \mathbb{Z}_2(\gamma_{i-1})) \rtimes (\mathbb{Z}_2(\gamma_{i}) \times \cdots \times \mathbb{Z}_2(\gamma_n))$, where $i=2,\cdots,n$. Second, we use this result to prove the well-known Fubini theorem for a subgroup of index $2n$ of a compact Lie group. Finally, we present an application to invariant theory.
\end{abstract}

\textbf{Key words:} Fubini theorem. Lie group. Invariant theory.

\section{Introduction}


The Fubini theorem was introduced by Guido Fubini in 1907 (\cite{fubini1907}). This theorem presents many variants and reduces integration in multiple variables to more simple iterated integrals. As a consequence, the integration order can be reversed in iterated integrals. 

In this paper, we work with definite integrals over a compact Lie group $\Gamma$. These integrals are called Haar integrals and have the property of being invariant under translation by elements of the group. In this case, the Fubini theorem presents a version that is applied to subgroups of Lie group that have index $2$, under the conditions that $\Gamma$ is a compact Lie group, $\Gamma_+ \subset \Gamma$ a subgroup of index $2$ of $\Gamma$ and $\Gamma_-=\Gamma \setminus \Gamma_+$ the complement set of $\Gamma_+$ with respect to $\Gamma$. Then for any $f: \Gamma \to \mathbb{R}$ continuous we have
$$\int_{\Gamma}f(\gamma)=\frac{1}{2} \left(\int_{\Gamma_+}f(\gamma)+\int_{\Gamma_+}f(\lambda \gamma) \right),$$
for fixed $\lambda \in \Gamma_-$. Of course, if $\lambda \in \Gamma_+$ the equality is not valid, since
$$\int_{\Gamma}f(\gamma) \neq \int_{\Gamma_+}f(\gamma),$$ 
because $\Gamma_+ \subsetneq \Gamma$. Also, note that $\Gamma_-$ is a homogeneous space.
This version of the theorem has applications in recent work, one of which we can mention \cite{zeli2013}.

Our motivation to study the Fubini theorem came from the Lorentz orthogonal group. This group has four connected components and presents important properties in invariant theory (\cite{oliveiraaspectos} and \cite{manoel2019equivariant}) and in Einstein's relativity physics (\cite{einstein1905} and \cite{minkowski1909}). The paper of \cite{carmeli1986rotational} illustrates the application of Lorentz orthogonal group in relativity physics. Since the Lorentz group has four connected components, it can be written as the semi-direct product normal subgroup of index $4$ with the cartesian product of two involution subgroups. Therefore, it is often necessary to apply Fubini theorem to the integral calculation in normal subgroups of index larger than $2$.

This paper is organized as follows: In Section 2, we present the basic concepts in which we can approach this theorem as the representation theory of Lie group and Haar integral. We begin Section 3 by presenting an important algebraic result, namely Proposition \ref{corsemidireto}:

{ \it
Let $\Gamma_+$ be a normal subgroup of index $2n$ of $\Gamma$ and $\gamma_i \in \Gamma \setminus \Gamma_+$ involutions, $i \in \{1,\cdots,n\}$. If $\Gamma$ can be decomposed as $$\Gamma = \Gamma_+ \rtimes (\mathbb{Z}_2(\gamma_1) \times \cdots \times \mathbb{Z}_2(\gamma_n)),$$ then $$\Gamma = (\Gamma_+ \rtimes \mathbb{Z}_2(\gamma_1) \rtimes \cdots \rtimes \mathbb{Z}_2(\gamma_{i})) \rtimes (\mathbb{Z}_2(\gamma_{i+1}) \times \cdots \times \mathbb{Z}_2(\gamma_n)),$$ 
where $i \in \{1,\cdots,n\}$.}

This result allows us to make a new proof of the Fubini Theorem for normal Lie subgroups of index $2n$, Theorem \ref{genfubini}:

{ \it
	Let $\Gamma$ be a compact Lie group and $\Gamma_+ \subset \Gamma$ a normal subgroup of index $2n$ of $\Gamma$. For any continuous $f: \Gamma \to \mathbb{R}$ we have
	$$\int_{\Gamma}f(\gamma)=\frac{1}{2^n} \left(\sum_{i \in N^n_2} \int_{\Gamma_+}f(\lambda^i\gamma) \right),$$
	for $\lambda_i \in \Gamma_-$ fixed.
}

We conclude this paper by showing an application of this theorem in invariant theory, specifically for the calculation of the Molien series.

\section{Preliminaries}

In this section we show some basic concepts for understanding the Fubini theorem. We begin with a definition of representation and group actions and define the Haar integral. We conclude this section by presenting a classic version of the Fubini theorem for group actions.

\subsection{Representations and group actions}

Here we see a notion about Lie groups and representation theory. Let start with the Lie group definition. More details on this topic can be found at \cite{hall2015lie}. 

A \emph{Lie group} $\Gamma$ is a differentiable manifold with a group structure in which the multiplication and inversion maps 
\begin{center}
\begin{tabular}{ccc}
	\begin{tabular}{ccc}
		$\Gamma \times \Gamma$ & $\to$ & $\Gamma$\\
		$(\gamma,\delta)$&$\mapsto$& $\gamma\delta$
	\end{tabular}
	& $\quad$ and $\quad$ & 
	\begin{tabular}{ccc}
		$\Gamma$ & $\to$ & $\Gamma$ \\
	 $\gamma$&$\mapsto$& $\gamma^{-1}$
	\end{tabular}
\end{tabular}
\end{center}
are smooth.

Cartan's closed subgroup theorem asserts that any closed subgroup $\Gamma_+$ of a Lie group $\Gamma$ is a Lie subgroup. This theorem admits the following converse: If $\Gamma_+ \subseteq \Gamma$ is an embedded Lie subgroup, then $\Gamma_+$ is closed. For more details, proofs and examples, see \cite{san2021lie}, page 129. A Lie group is  compact if it is compact as a topological space.

	
 
 
 Throughout this work, the group $\Gamma$ represents a compact Lie group and $V$ a vector space, unless otherwise noted.

We define the\emph{ left action of a Lie group $\Gamma$ on a vector space} $V$, or simply an action of $\Gamma$ on $V$, as a function $\varphi : \Gamma \times V \to V$ such that for all $\gamma, \delta \in \Gamma$ and $v \in V$ we have $$ \varphi (\gamma,\varphi(\delta,v))=\varphi(\gamma\delta,v) \quad \text{and} \quad \varphi(1,v)=v,$$
where $1$ is the identity element of $\Gamma$. To abbreviate the notation, we use $\gamma v$ instead of $\varphi(\gamma,v)$ to indicate the action of $\gamma$ on $v$. 

An action of $\Gamma$ on $V$ corresponds to a group homomorphism 
\begin{eqnarray*}
\rho &:& \Gamma \longrightarrow GL(n) \\
& &\gamma \longmapsto \rho_{\gamma}
\end{eqnarray*}
called the representation of $\Gamma$ on $V$.


\subsection{The Haar integral and the Fubini theorem}

In this section we present the Haar integral and discuss the Fubini theorem for Lie groups.

We can identify any compact Lie group contained in $GL(n)$ as a subgroup of the orthogonal group $O(n)$. This identification is made using a form of integration that is invariant by translating the elements of $\Gamma$, called the \textit{Haar integral}. 

A proper definition of the Haar integral must be based on the Haar measure. However, for our purposes, it is sufficient to consider the Haar integral as an operation that satisfies the properties given in Definition \ref{haar}. For more details on the Haar integral, see \cite{nachbin1976haar}.

\bigskip

\begin{defi}[\cite{golubitsky2012singularities}] \label{haar}
	 Let $f:\Gamma \to \mathbb R$ be a continuous real function. The operation $\int_{\gamma \in \Gamma} f(\gamma)$ or $\int_{\Gamma} f$ it is a Haar integral in $\Gamma$ if it satisfied the following properties:
	 \begin{itemize}
	 	\item[(i)]	Linearity: $$\int_{\Gamma} (af+bg)=a \int_{\Gamma}f+b\int_{\Gamma}g,$$
	 	where $f,g:\Gamma \to \mathbb R$ are continuous functions and $a,b \in \mathbb R$.
	 	\item[(ii)] Positivity: If $f(\gamma) \geq 0$, for all $\gamma \in \Gamma$ then $$\int_{\Gamma} f \geq 0.$$
	 	\item[(iii)] Translational invariance on the left:
	 	$$\int_{\gamma \in \Gamma} f(\delta \gamma)=\int_{\gamma \in \Gamma} f(\gamma),$$
	 	for any fixed $\delta \in \Gamma$.
	\end{itemize}
\end{defi}

\bigskip

It is known that the Haar integral is unique \cite{hochschild1965structure}. If the Lie group is compact then the Haar integral is also invariant by right-hand translation and can be normalized, that is, $\int_{\Gamma} 1 = 1$.

If $\Gamma$ is a finite Lie group then the Haar integral on $\Gamma$ is 
$$\int_{\Gamma}f(\gamma) \equiv \frac{1}{|\Gamma|} \sum_{\gamma \in \Gamma} f(\gamma).$$
Let $\Gamma$ be a compact Lie group isomorphic to the special orthogonal group $SO(2)$, for every continuous function $f:SO(2) \to \mathbb{R}$ the Haar integral on $\Gamma$ is 
\begin{eqnarray} \label{haarcompacto}
	\int_{\Gamma}f(\gamma) \equiv \frac{1}{2 \pi}\int_{0}^{2 \pi} f(R_{\theta})d\theta,
\end{eqnarray}
where $R_{\theta}$ is a rotation of $SO(2)$, with $0 \leq \theta \leq 2\pi$.

In general, it is not easy to calculate Haar integral, so the Fubini theorem enunciated below, reduces the calculation of the Haar integral on $\Gamma$ for integrals on $\Gamma_+$, a subgroup of index $2$ of $\Gamma$.

\bigskip

\begin{theorem}[The Fubini theorem] \label{fubini}
	Let $\Gamma$ be a compact Lie group and $\Gamma_+ \subset \Gamma$ a subgroup of index $2$ of $\Gamma$. For any continuous function $f: \Gamma \to \mathbb{R}$, we have
	$$\int_{\Gamma}f(\gamma)=\frac{1}{2} \left(\int_{\Gamma_+}f(\gamma)+\int_{\Gamma_+}f(\lambda \gamma) \right),$$
	for fixed $\lambda \in \Gamma_-$.
\end{theorem}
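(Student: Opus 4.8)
The plan is to deduce the identity from the uniqueness of the normalized Haar integral on a compact Lie group. Concretely, I would introduce the candidate functional on the space of continuous real functions on $\Gamma$ given by
$$I(f)=\frac{1}{2}\left(\int_{\Gamma_+}f(\gamma)+\int_{\Gamma_+}f(\lambda\gamma)\right),$$
where $\lambda\in\Gamma_-$ is the fixed element and each integral on the right is the normalized Haar integral of the relevant restriction to the compact subgroup $\Gamma_+$ (which, being a closed subgroup of a compact group, carries its own normalized Haar integral). The goal is then to show that $I$ satisfies the three defining properties of a Haar integral together with the normalization $I(1)=1$; by uniqueness this forces $I(f)=\int_{\Gamma}f$ for every continuous $f$, which is exactly the claim.

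The linearity, positivity, and normalization properties are routine. Linearity of $I$ follows from linearity of the Haar integral on $\Gamma_+$, since $I$ is a fixed linear combination of the two linear maps $f\mapsto\int_{\Gamma_+}f(\gamma)$ and $f\mapsto\int_{\Gamma_+}f(\lambda\gamma)$. For positivity, if $f\ge 0$ on all of $\Gamma$ then both $\gamma\mapsto f(\gamma)$ and $\gamma\mapsto f(\lambda\gamma)$ are nonnegative on $\Gamma_+$, so each summand is nonnegative. Normalization is the computation $I(1)=\tfrac{1}{2}(1+1)=1$, using $\int_{\Gamma_+}1=1$.

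The substantive step is left-translation invariance: I must verify $I(L_\delta f)=I(f)$ for every $\delta\in\Gamma$, where $(L_\delta f)(x)=f(\delta x)$. Since $\Gamma_+$ has index $2$ it is normal, and $\Gamma$ is generated by $\Gamma_+$ and $\lambda$, so it suffices to treat two cases, each valid for an arbitrary continuous function. For $\delta\in\Gamma_+$, the term $\int_{\Gamma_+}f(\delta\gamma)$ equals $\int_{\Gamma_+}f(\gamma)$ by left-invariance of the Haar integral on $\Gamma_+$; for the mixed term I would write $\delta\lambda=\lambda(\lambda^{-1}\delta\lambda)$ with $\lambda^{-1}\delta\lambda\in\Gamma_+$ by normality, and apply left-invariance on $\Gamma_+$ again to obtain $\int_{\Gamma_+}f(\delta\lambda\gamma)=\int_{\Gamma_+}f(\lambda\gamma)$, so that $I(L_\delta f)=I(f)$. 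For $\delta=\lambda$, I would use $\lambda^2\in\Gamma_+$ to get $\int_{\Gamma_+}f(\lambda^2\gamma)=\int_{\Gamma_+}f(\gamma)$, whence the two summands of $I(L_\lambda f)$ are precisely those of $I(f)$ in reversed order. Finally, any $\delta\in\Gamma_-$ factors as $\delta=\sigma\lambda$ with $\sigma=\delta\lambda^{-1}\in\Gamma_+$, so $L_\delta=L_\lambda L_\sigma$; applying the $\delta=\lambda$ case to $L_\sigma f$ and then the $\Gamma_+$ case to $f$ gives $I(L_\delta f)=I(L_\sigma f)=I(f)$.

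I expect the main obstacle to be the bookkeeping in the mixed term of the first case: the map $\gamma\mapsto f(\delta\lambda\gamma)$ does not reduce directly to a left translation inside $\Gamma_+$, because $\delta\lambda\in\Gamma_-$, and the reduction genuinely requires the normality of $\Gamma_+$ through the conjugate $\lambda^{-1}\delta\lambda$. A secondary point worth noting is the compatibility of normalizations between $\Gamma$ and its index-two subgroup $\Gamma_+$; the uniqueness argument avoids measuring cosets explicitly, so this compatibility is recovered a posteriori once $I=\int_{\Gamma}$ is established.
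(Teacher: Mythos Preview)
Your argument is correct. The paper does not actually prove this statement: Theorem~\ref{fubini} is quoted from \cite{brocker2013representations} (Proposition~5.16) and used as a black box in the induction for Theorem~\ref{genfubini}, so there is no in-paper proof to compare against. Your route via uniqueness of the normalized Haar integral is the standard one and fits the paper's framework exactly, since the paper explicitly records both the defining properties (linearity, positivity, left invariance) and the uniqueness result it relies on. The only steps worth double-checking are those you already flagged: normality of the index-two subgroup to handle $\int_{\Gamma_+}f(\delta\lambda\gamma)$ via the conjugate $\lambda^{-1}\delta\lambda\in\Gamma_+$, and $\lambda^2\in\Gamma_+$ to swap the two summands under $L_\lambda$; both are fine.
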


\bigskip

Note that since $\Gamma_+$ is a subgroup of index $2$ of $\Gamma$, then a $\Gamma$ Lie group can be written as a semi-direct product of $\Gamma_+$ and $\mathbb{Z}_2(\lambda)$, i.e.,
$$\Gamma=\Gamma_+ \rtimes \mathbb{Z}_2(\lambda),$$
where $\mathbb{Z}_2(\lambda)$ is a subgroup of $\Gamma$ generated by an involution $\lambda \in \Gamma_-$. Recall that the semi-direct product of groups is the product of subgroups, where $\Gamma_+$ is a normal subgroup of $\Gamma$ and the subgroups $\Gamma_+$ and $\mathbb{Z}_2(\lambda)$ have trivial intersection (\cite{robinson2012course}).

\section{The Fubini theorem for Lie normal subgroups of index $2n$}
In this section we present the Fubini theorem for the case where the group $\Gamma_+$ is a normal subgroup of index $2n$ of $\Gamma$. We also present an application to invariant theory. For this we need the following lemmas.  

\bigskip

\begin{lema}[\cite{oliveiraaspectos}]\label{semidireto}
	Let $\Gamma$ be a Lie group and $\Gamma_+$ be a normal subgroup of $\Gamma$. Suppose that $$\Gamma=\Gamma_+ \rtimes (\mathbb{Z}_2(\gamma_1) \times \mathbb{Z}_2(\gamma_2)),$$ where $\gamma_1, \gamma_2 \in \Gamma \setminus \Gamma_+$ are distinct involutions. Then $$\Gamma=(\Gamma_+ \rtimes \mathbb{Z}_2(\gamma_1)) \rtimes \mathbb{Z}_2(\gamma_2).$$
\end{lema}

\bigskip

\begin{lema} \label{z2abeliano}
	The group $\Gamma=\Gamma_1 \times \Gamma_2 \times \cdots \Gamma_n$ is abelian if and only if $\Gamma_i$ is abelian, with $i=1,2,\cdots,n$.
\end{lema}

In the following proposition, we generalize the Lemma \ref{semidireto} for the case where $ \Gamma_+ $ is a normal subgroup of index $2n$ of $ \Gamma$. The proof is similar to the proof of Lemma \ref{semidireto} and generalizes it.

\bigskip

\begin{proposicao} \label{corsemidireto}
	Let $\Gamma_+$ be a normal subgroup of index $2n$ of $\Gamma$ and $\gamma_i \in \Gamma \setminus \Gamma_+$ involutions, $i \in \{1,\cdots,n\}$. If $\Gamma$ can be decomposed as $$\Gamma = \Gamma_+ \rtimes (\mathbb{Z}_2(\gamma_1) \times \cdots \times \mathbb{Z}_2(\gamma_n)),$$ then $$\Gamma = (\Gamma_+ \rtimes \mathbb{Z}_2(\gamma_1) \rtimes \cdots \rtimes \mathbb{Z}_2(\gamma_{i})) \rtimes (\mathbb{Z}_2(\gamma_{i+1}) \times \cdots \times \mathbb{Z}_2(\gamma_n)),$$ 
	where $i \in \{1,\cdots,n\}$.
\end{proposicao}

\begin{proof}
	Let $\Gamma_+$ be a normal subgroup of $\Gamma$ and $\overline{\Gamma} =\Gamma_+ \rtimes \mathbb{Z}_2(\gamma_1) \rtimes \cdots \rtimes \mathbb{Z}_2(\gamma_{i})$. Note that $\overline{\Gamma}$ is also a normal subgroup of $\Gamma$. Let $\gamma \in \Gamma$ and $\bar{\delta} \in \overline{\Gamma}$ be any elements, with $\bar{\delta}=\delta \gamma^i$ and $\gamma^i=\gamma_1^{m_1}\gamma_2^{m_2}\cdots \gamma_i^{m_i} \in \mathbb{Z}_2(\gamma_1) \rtimes \cdots \rtimes \mathbb{Z}_2(\gamma_{i})$, where $m_j \in \{0,1\}$. Then
	\begin{eqnarray*}
		\gamma \bar{\delta} \gamma^{-1} = \gamma (\delta \gamma^i) \gamma^{-1} = (\gamma \delta \gamma^{-1})(\gamma \gamma^i \gamma^{-1})=\tilde{\delta}(\gamma \gamma^i \gamma^{-1}).
	\end{eqnarray*}
	Once $\tilde{\delta} \in \Gamma_+ \subset \overline{\Gamma}$, by hypothesis. We just show that $\gamma \gamma^i \gamma^{-1} \in \overline{\Gamma}$.
	Indeed, once that $\Gamma = \Gamma_+ \rtimes (\mathbb{Z}_2(\gamma_1) \times \cdots \times \mathbb{Z}_2(\gamma_n))$ so there is some $\delta_1 \in \Gamma_+$ such that $\gamma=\delta_1 \gamma^n$, with $\gamma^n \in \mathbb{Z}_2(\gamma_1) \times \cdots \times \mathbb{Z}_2(\gamma_n)$. Thus
	\begin{eqnarray*}
		\gamma \gamma^i \gamma^{-1}=\delta_1 \gamma^{n} \gamma^i (\delta_1 \gamma^{n})^{-1}=\delta_1 \gamma^{n} \gamma^i (\gamma^{n})^{-1} \delta_1^{-1}=\delta_1 \gamma^i \delta_1^{-1},
	\end{eqnarray*}
	because the group $\mathbb{Z}_2(\gamma_1) \times \cdots \times \mathbb{Z}_2(\gamma_n)$ is abelian, since each $\mathbb{Z}_2(\gamma_i)$ is also abelian (Lemma \ref{z2abeliano}). As $\delta_1$ and $\gamma^i$ belongs to $\overline{\Gamma}$, then $\gamma \gamma^i \gamma^{-1} \in \overline{\Gamma}$. Therefore, $\overline{\Gamma}$ is a normal subgroup of $\Gamma$. We claim that
	$$\Gamma=\overline{\Gamma} \rtimes (\mathbb{Z}_2(\gamma_{i+1}) \times \cdots \times \mathbb{Z}_2(\gamma_n)).$$
	From hypothesis, we have that each $\gamma \in \Gamma$ can be written in the form $\gamma=\delta \gamma^n$. Since $\bar{\delta}=\delta \gamma^i \in \overline{\Gamma}$ we have that $\gamma=\bar{\delta} \gamma_{i+1}^{m_{i+1}}\cdots \gamma_{n}^{m_n}$. Thus $\gamma \in \overline{\Gamma} \rtimes (\mathbb{Z}_2(\gamma_{i+1}) \times \cdots \times \mathbb{Z}_2(\gamma_n))$. The inclusion $\overline{\Gamma} \rtimes (\mathbb{Z}_2(\gamma_{i+1}) \times \cdots \times \mathbb{Z}_2(\gamma_n)) \subset \Gamma$ is clearly immediate. So it follows the equality of the sets. It is also clear that
	$$\overline{\Gamma} \cap (\mathbb{Z}_2(\gamma_{i+1}) \times \cdots \times \mathbb{Z}_2(\gamma_n))=\{e\},$$
	because each $\gamma_{j} \notin \overline{\Gamma}$, with $j=i+1,\cdots,n$.	
\end{proof}

The decomposition for $\Gamma$ seen above does not always hold. Firstly, the group $\Gamma$ must have at least one normal proper subgroup. Furthermore, an implicit condition is that the group $\Gamma$ has involutions. Borel's book deals with some conditions for the existence of this decomposition (\cite{borel1998}, Theorem 1.2). In that book, Borel defines $\Gamma$ as having many connected components and $\Gamma_+$ as a maximal compact subgroup.

However, there are other groups in which this decomposition occurs. In addition to the example presented at the end of this paper, consider the dihedral group $D_4$. This group has eight elements. The group $\langle R_{\pi} \rangle$ generated by rotating $\pi$ radians is a normal subgroup of $D_4$ with index four, and $D_4=R_{\pi} \rtimes \left(\mathbb{Z}_2(S_y) \times \mathbb{Z}_2(S_x)\right)$, where $S_x$ and $S_y$ are the reflections around the $x$ and $y$ axes, respectively. 
Note that group $\langle R_{\pi} \rangle$ is not maximal. In the case of the maximal compact group $H$ generated by all rotations, we have that $D_4=H \rtimes S_x$.

\bigskip

To simplify the notation, we take $i=(i_1,\cdots,i_n)$ an element of $N^n_2$, where $N_2$ is the set $\{0,1\}$. Let $\lambda=(\lambda_1,\cdots,\lambda_n)$, with $\lambda_i \in \Gamma$, we define the power $\lambda^i=\lambda_1^{i_1}\lambda_2^{i_2}\cdots \lambda_n^{i_n}$. In these terms, we enunciate the \textit{Fubini Theorem} for normal Lie subgroups of even index which is a particular case of the version presented in (\cite{brocker2013representations}, Proposition 5.16).

\bigskip

\begin{theorem}[Fubini Theorem for normal Lie subgroups of index $2n$] \label{genfubini}
	Let $\Gamma$ be a compact Lie group and $\Gamma_+ \subset \Gamma$ a normal subgroup of index $2n$ of $\Gamma$. For any continuous $f: \Gamma \to \mathbb{R}$ we have
	$$\int_{\Gamma}f(\gamma)=\frac{1}{2^n} \left(\sum_{i \in N^n_2} \int_{\Gamma_+}f(\lambda^i\gamma) \right),$$
	where $\lambda_i$'s are distinct involutions and $\lambda_i \in \Gamma_-$ are fixed.
\end{theorem}
\begin{proof}
We proceed by induction. For $n=1$ we have the classical Fubini theorem, Theorem \ref{fubini}. 
Assume that 
$$\int_{\bar{\Gamma}}f(\gamma)=\frac{1}{2^k} \left(\sum_{i \in N^k_2} \int_{\Gamma_+}f(\lambda^i\gamma) \right)$$
holds, where $\gamma_1,\cdots,\gamma_k \in \Gamma \setminus \Gamma_+$ are distinct involutions and $$\bar{\Gamma}=\Gamma_+ \rtimes \mathbb{Z}_2(\gamma_1) \rtimes \cdots \rtimes \mathbb{Z}_2(\gamma_{k}).$$
Suppose that
$$\Gamma=\Gamma_+ \rtimes (\mathbb{Z}_2(\gamma_1) \times \cdots \times \mathbb{Z}_2(\gamma_{k+1})),$$ where $\gamma_{k+1}$ is an involution and $\gamma_{k+1} \notin  \bar{\Gamma}$.
By Proposition \ref{corsemidireto} we have that $$\Gamma=(\Gamma_+ \rtimes \mathbb{Z}_2(\gamma_1) \rtimes \cdots \rtimes \mathbb{Z}_2(\gamma_{k})) \rtimes \mathbb{Z}_2(\gamma_{k+1})=\bar{\Gamma} \rtimes \mathbb{Z}_2(\gamma_{k+1}).$$
Note that $\bar{\Gamma}$ is a normal subgroup of index $2$ of $\Gamma$. Thus, we apply Theorem \ref{fubini} and we get
$$\int_{\Gamma}f(\gamma)=\frac{1}{2} \left(\int_{\bar{\Gamma}}f(\gamma)+\int_{\bar{\Gamma}}f(\lambda_{k+1} \gamma) \right).$$
Therefore, using the induction hypothesis, we have
\begin{eqnarray*}
\int_{\Gamma}f(\gamma)
&=&\frac{1}{2} \left(\frac{1}{2^k} \sum_{i \in N^k_2} \int_{\Gamma_+}f(\lambda^i\gamma) +\frac{1}{2^k} \sum_{i \in N^k_2} \int_{\Gamma_+} f(\lambda_{k+1} \lambda^i \gamma) \right) \\
&=& \frac{1}{2^{k+1}}\sum_{i \in N^k_2}  \left( \int_{\Gamma_+}f(\lambda_{k+1}^0\lambda^i\gamma)+ \int_{\Gamma_+} f(\lambda_{k+1} \lambda^i \gamma) \right) \\
&=& \frac{1}{2^{k+1}}\sum_{i \in N^k_2}  \left( \sum_{i_{k+1} \in N_2} \int_{\Gamma_+}f(\lambda_{k+1}^{i_{k+1}}\lambda^i\gamma) \right) \\
&=& \frac{1}{2^{k+1}} \sum_{j \in N^{k+1}_2} \int_{\Gamma_+}f(\lambda^j\gamma),
\end{eqnarray*}
once if $\lambda^j=\lambda^i\lambda_{k+1}^{i_{k+1}}$, with $i \in N^{k}_2$ and $i_{k+1} \in N_2$, then $j \in N^{k+1}_2$.
\end{proof}


The importance of this theorem must be now evident. To calculate a Haar integral over a Lie group $\Gamma$ is tantamount to calculate the integral over a normal subgroup of index $2n$ of $\Gamma$, which is naturally a simpler task.

\section{An application of Fubini theorem to invariant theory}

In this section we present an application of the Theorem \ref{genfubini} used to calculate the Molien series, a useful tool in invariant theory to estimate the number of invariant polynomials by the action of a Lie group on a vector space.

Consider the action of a Lie group $\Gamma$ on a vector space $V$. We say that a polynomial map $f:V \to \mathbb R$ is invariant by action of $\Gamma$ on $V$, or $\Gamma$-invariant, if
$$f(\gamma x)=f(x),$$
for all $x \in V$ and $\gamma \in \Gamma$. It is not difficult to show that the set of all the $\Gamma$-invariant polynomials has the structure of a ring, which we denote by $\mathcal{P}(\Gamma)$. It is known that if $\Gamma$ is a compact Lie group then the ring of invariant polynomials is finitely generated, this case is known as the Hilbert-Weyl Theorem (\cite{golubitsky2012singularities}). The result is also valid for the case where $\Gamma$ is a reductive group (\cite{luna} and \cite{derksen2015computational}). If the set of generators of the invariant polynomials is finite, it is called the \emph{Hilbert basis} of the ring $\mathcal{P}(\Gamma)$.


The ring $\mathcal{P}_V=\mathbb{R}[x_1,\cdots,x_n]$ is a graded algebra over $\mathbb{R}$, i.e.,
$$\mathcal{P}_V=\bigoplus_{d=0}^{\infty}\mathcal{P}_{V_d},$$ where $\mathcal{P}_{V_d}$ is the vector subspace of homogeneous polynomial maps of degree $d$ (\cite{sturmfels2008algorithms}). Let $\mathcal{P}_d(\Gamma)$ be the subspace of all homogeneous $\Gamma$-invariants polynomials of degree $d$. Since the action of $\Gamma$ in $V$ is linear then $\gamma f \in \mathcal{P}_d(\Gamma)$, for all $\gamma \in \Gamma$ and $f \in \mathcal{P}_d(\Gamma)$. Therefore, the ring of $\Gamma$-invariants polynomials  $\mathcal{P}(\Gamma)$ is a graded algebra over $\mathbb{R}$. This property is very useful because we can calculate the invariant polynomials degree to degree, since an invariant polynomial of degree $m$ can be written as a sum of invariant polynomials of degrees $1, 2,\cdots, m$.

	The Hilbert series of graded algebra $\mathcal{P}(\Gamma)$ is the function given by $$\Phi_{\Gamma}(t)=\sum_{d=0}^{\infty}\dim(\mathcal{P}_d(\Gamma))t^d.$$

Denoting by $c_d$ the number of invariant polynomials of degree $d$ on $V$, one defines the Hilbert series $$\Phi_\Gamma (t)=\sum \limits_{d=0}^{\infty} c_d t^d.$$

The following theorem is a classical result that gives us an explicit form of the Hilbert series in terms of the matrix representations of $\Gamma$. The Hilbert series defined in this way is called the \textit{Molien series}.

\bigskip

\begin{theorem} [Molien theorem]
	The Hilbert series of the invariant ring $\mathcal{P} (\Gamma)$ equals
	\begin{eqnarray*}
	\Phi_\Gamma (t)&=&
	\frac{1}{|\Gamma|} \sum_{\gamma \in \Gamma} \frac{1}{(\det(I-t\gamma))} ,\quad \mbox{if} \quad \Gamma \quad \mbox{is a finite discrete group and} \\
	\Phi_\Gamma (t)&=&
	\int_{\gamma \in \Gamma} \frac{1}{(\det(I-t\gamma))} ,\quad \mbox{if} \quad \Gamma \quad \mbox{is a compact group.} 
	\end{eqnarray*}
\end{theorem}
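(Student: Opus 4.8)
The plan is to realize each dimension $\dim(\mathcal{P}_d(\Gamma))$ as the trace of an averaging projection, and then to package these traces into a generating function that collapses to the stated determinantal formula. Throughout I identify $\Gamma$ with its image under the representation $\rho$, so that each $\gamma \in \Gamma$ is an invertible matrix acting on $V$ and hence on $\mathcal{P}_V=\bigoplus_{d}\mathcal{P}_{V_d}$ by $(\gamma \cdot f)(x)=f(\gamma^{-1}x)$; this action preserves each graded piece $\mathcal{P}_{V_d}$, giving a finite-dimensional representation $\rho_d:\Gamma \to GL(\mathcal{P}_{V_d})$ whose invariant subspace is exactly $\mathcal{P}_d(\Gamma)$.

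First I would introduce the averaging (Reynolds) operator in degree $d$,
$$R_d:\mathcal{P}_{V_d}\to\mathcal{P}_{V_d},\qquad R_d(f)=\int_{\gamma\in\Gamma}(\gamma\cdot f).$$
Using the left-invariance property (iii) of the Haar integral together with the normalization $\int_\Gamma 1=1$, one checks that $R_d$ is linear, fixes every element of $\mathcal{P}_d(\Gamma)$, and satisfies $R_d^2=R_d$; hence $R_d$ is a projection of $\mathcal{P}_{V_d}$ onto $\mathcal{P}_d(\Gamma)$. Since the trace of a projection equals the dimension of its image, and since the trace of a matrix-valued integral is the integral of the trace (integrating entrywise), I obtain
$$\dim(\mathcal{P}_d(\Gamma))=\operatorname{tr}(R_d)=\int_{\gamma\in\Gamma}\operatorname{tr}(\rho_d(\gamma)).$$

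The second step is the pointwise computation of $\operatorname{tr}(\rho_d(\gamma))$. Fixing $\gamma$ and passing to $\mathbb{C}$, let $\mu_1,\dots,\mu_n$ be its eigenvalues on $V$; then the degree-$d$ monomials in a corresponding eigenbasis are eigenvectors of $\rho_d(\gamma)$ with eigenvalues $\mu_1^{a_1}\cdots\mu_n^{a_n}$ over all $a_1+\cdots+a_n=d$, so $\operatorname{tr}(\rho_d(\gamma))=\sum_{a_1+\cdots+a_n=d}\mu_1^{a_1}\cdots\mu_n^{a_n}$. Summing the generating series in $t$ and using $\det(I-t\gamma)=\prod_{i=1}^n(1-t\mu_i)$ gives the key identity
$$\sum_{d=0}^{\infty}\operatorname{tr}(\rho_d(\gamma))\,t^d=\prod_{i=1}^n\frac{1}{1-t\mu_i}=\frac{1}{\det(I-t\gamma)},$$
where I note that the convention $\gamma$ versus $\gamma^{-1}$ is immaterial to the final answer, since the compact-group Haar integral is inversion-invariant.

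Finally I would assemble the two computations. Substituting the trace formula into the definition of $\Phi_\Gamma$ and interchanging the summation over $d$ with the Haar integral yields
$$\Phi_\Gamma(t)=\sum_{d=0}^{\infty}\dim(\mathcal{P}_d(\Gamma))\,t^d=\int_{\gamma\in\Gamma}\left(\sum_{d=0}^{\infty}\operatorname{tr}(\rho_d(\gamma))\,t^d\right)=\int_{\gamma\in\Gamma}\frac{1}{\det(I-t\gamma)},$$
which is the compact-group formula; specializing the Haar integral to $\frac{1}{|\Gamma|}\sum_{\gamma\in\Gamma}$ for finite $\Gamma$ recovers the finite-group formula. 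The main obstacle is justifying the interchange of the infinite sum and the integral. I would handle this by restricting to $t$ in a small disc: because $\Gamma$ is compact it may be conjugated into $O(n)$, so every eigenvalue satisfies $|\mu_i|=1$, whence the series $\sum_d\operatorname{tr}(\rho_d(\gamma))\,t^d$ converges uniformly in $\gamma$ for $|t|<1$ against a continuous, hence bounded, integrand. Uniform convergence (or dominated convergence) then legitimizes the exchange, and the resulting identity of convergent power series extends to the level of formal generating functions.
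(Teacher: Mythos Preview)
Your argument is correct and follows the standard route to Molien's formula: average to get a projection, take traces to get dimensions, compute the graded character via eigenvalues, and sum the geometric series into $\det(I-t\gamma)^{-1}$. The paper itself does not supply a proof of this theorem at all; it simply refers the reader to Molien's original paper for the finite case and to Sattinger--Weaver for the compact extension. So you have actually provided substantially more than the paper does here, and what you wrote is essentially the classical proof one finds in those references. The only small remark is that when you ``pass to $\mathbb{C}$'' and diagonalize, you are implicitly using that elements of a compact group act semisimply (which follows from the conjugation into $O(n)$ you invoke later); it would be slightly cleaner to state that up front, but the logic is sound.
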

\begin{proof}
	See \cite{molien1897invarianten} for the original proof of the finite case, and \cite{sattinger2006group} for the extension to a compact group.
\end{proof}

Thus, the coefficients $c_d$ of the Molien series gives us an estimate of the amount of invariant polynomials of degree $d$.

In order to show an application of the Theorem \ref{genfubini}, let us use the invariant theory the action of the Lorentz group on Minkowski space.

By definition, the Lorentz group $O(n,1)$ is a subgroup of the Poincaré group that preserves the nondegenerate symmetric bilinear form 
$\left< , \right> :\mathbb{R}^{n+1}\times\mathbb{R}^{n+1}\longrightarrow \mathbb{R}$ defined by $$\left< x,y\right>=\sum_{i=1}^{n}x_{i}y_{i}-x_{n+1}y_{n+1},$$ called Lorentz inner pseudo-product and also preserve the Lorentz distance. However, the Lorentz group does not contain all the transformations that preserve the Lorentz distance, this is a property of the Poincaré group, the group of isometries in the Minkowski space. The space $\mathbb{R}^{n+1}$ endowed with the metric induced by Lorentz inner pseudo-product $\left< , \right>$ is called \textit{Minkowski space} and is denoted by $\mathbb{R}^{n+1}_1$.

\newpage

Consider the Lorentz subgroup 
$$\Gamma=\Gamma_+ \rtimes (\mathbb Z_2(\lambda_1) \times \mathbb Z_2(\lambda_2)) \subset O(3,1),$$
where
\begin{eqnarray*} 
\Gamma_+:=\left\{ \left( \begin{array}{cc} R_{\phi} & 0 \\ 0 & I_2 \end{array} \right): R_{\phi} \in SO(2) \right\} \label{grupocompacto}
\end{eqnarray*}
and $\mathbb Z_2(\lambda_1)$ and $\mathbb Z_2(\lambda_2)$ are generated by involutions of $O(3,1)$ given by 
$$\lambda_1:= \left( \begin{array}{cc} I_2 & 0 \\ 0 & S_h^-(\theta) \end{array} \right), \mbox{ with } S_h^-(\theta)= \left( \begin{array}{cc} \cosh(\theta) & \sinh(\theta) \\ -\sinh(\theta) & -\cosh(\theta) \end{array} \right)$$
and
$$\lambda_2:= \left( \begin{array}{cc} I_2 & 0 \\ 0 & S_h^+(\theta) \end{array} \right), \mbox{ with } S_h^+(\theta)= \left( \begin{array}{cc} -\cosh(\theta) & -\sinh(\theta) \\ \sinh(\theta) & \cosh(\theta) \end{array} \right),$$
where $\theta \in \mathbb{R}$ is fixed. Note that
\begin{enumerate}
	\item $\Gamma_+$ is a compact subgroup of $O(3,1)$ isomorphic to $SO(2) \times \{I_2\}$. 
	\item Once $\Gamma$ is abelian then $\Gamma_+$ is a normal subgroup of $\Gamma$.
	\item As $\Gamma=\Gamma_+ \cupdot \lambda_1 \Gamma_+ \cupdot \lambda_2 \Gamma_+ \cupdot \lambda_1\lambda_2 \Gamma_+$,
	where $$ \lambda_1\lambda_2=\left( \begin{array}{cc} I_2 & 0 \\ 0 & -I_2 \end{array} \right),$$ then $\Gamma/ \Gamma_+=\{I_4,-I_4,\lambda_1,\lambda_2\}$. So, $|\Gamma/ \Gamma_+|=4$.
\end{enumerate}
Therefore, $\Gamma_+$ is a normal compact subgroup of $\Gamma$ with index four.

Consider the standard action of $\Gamma$ in $\mathbb{R}^4_1$. We want to calculate the Molien series of $\Gamma$ given by 
$$\Phi_{\Gamma}(t)=\int_{\Gamma} \frac{1}{\det(I_4-t\gamma)}.$$
By Theorem \ref{genfubini}, the above integral can be written as
\begin{dmath*}
\Phi_{\Gamma}(t)=\frac{1}{4} \left( \int_{\Gamma_+} \frac{1}{\det(I_4-t\gamma)}+\int_{\Gamma_+} \frac{1}{\det(I_4-t\lambda_1 \gamma)}+\int_{\Gamma_+} \frac{1}{\det(I_4-t\lambda_2 \gamma)}+\int_{\Gamma_+} \frac{1}{\det(I_4-t\lambda_1 \lambda_2 \gamma)} \right).
\end{dmath*}
Note that
\begin{equation*}
	I_4-t\gamma=\left( 
	\begin{array}{cc} I_2 & 0 \\ 0 & I_2 \end{array} \right)-t\left( \begin{array}{cc} R_{\phi} & 0 \\ 0 & I_2 \end{array} \right)=\left( \begin{array}{cc} I_2-tR_{\phi} & 0 \\ 0 & I_2-tI_2 \end{array} \right),
\end{equation*}
therefore
$$\det(I_4-t\gamma)=\det(I_2-tR_{\phi})\det(I_2-tI_2)=(-2 t \cos \phi+t^{2}+1) (t-1)^{2}.$$
In the same way, we have
\begin{eqnarray*}
\det(I_4-t\lambda_1\gamma) &=& \det(I_2-tR_{\phi})\det(I_2-tS_h^-)=(2t \cos\phi-t^{2}+1)(t^2-1), \\
\det(I_4-t\lambda_2\gamma) &=& \det(I_2-tR_{\phi})\det(I_2-tS_h^+)=(2t \cos\phi-t^{2}+1)(t^2-1), \\
\det(I_4-t\lambda_1 \lambda_2 \gamma) &=& \det(I_2-tR_{\phi})\det(I_2+tI_2)= (-2 t \cos\phi+t^{2}+1)(t+1)^{2}.
\end{eqnarray*}
Thus, since the groups $\Gamma_+$ and $SO(2) \times \{I_2\}$ are isomorphic then we can use the Haar integral on $SO(2)$ presented in (\ref{haarcompacto}):
\begin{dmath*}
	\Phi_{\Gamma}(t)
	=\frac{1}{8\pi} \left(\int_{0}^{2 \pi} \frac{d\phi}{(-2 t \cos \phi+t^{2}+1) (t-1)^{2}}+
	\int_{0}^{2 \pi} \frac{d\phi}{(2t \cos\phi-t^{2}+1)(t^2-1)}+
	\int_{0}^{2 \pi} \frac{d\phi}{(2t \cos\phi-t^{2}+1)(t^2-1)}+
	\int_{0}^{2 \pi} \frac{d\phi}{(-2 t \cos\phi+t^{2}+1)(t+1)^{2}} \right)
	=\frac{1}{8\pi} \int_{0}^{2 \pi} \frac{4}{(t-1)^{2} (t+1)^{2} (-2 t \cos \phi+t^{2}+1)}d\phi .
\end{dmath*}
So, we use \textit{Maple} to calculate the Molien series
\begin{dmath*}
	\Phi_{\Gamma}(t)=1+3t^2+6t^4+10t^6+15t^8+21t^{10}+28t^{12}+36t^{14}+45t^{16}+O(t^{18}),
\end{dmath*}
which shows that there are three invariant polynomials of degree two, six  invariant polynomials of degree four, ten invariant polynomials of degree six and so on. Indeed, a Hilbert basis of the ring $\mathcal{P}(\Gamma)$ is given by $$\{x^2+y^2,z^2-t^2,((\cosh(\theta)-1) t+\sinh(\theta) z)^2\}.$$

{\bf Using Maple to calculate the Molien series:} We conclude this paper by presenting the steps used in Maple to calculate the Molien series
\begin{enumerate}
	\item Enable the package \textit{LinearAlgebra}: 
	
	$$with(LinearAlgebra);$$
	\item Define the matrices:
	
	\begin{eqnarray*}
		I_2= \left( \begin{array}{cc}
			1 & 0 	\\	0 & 1 
		\end{array}\right); \\
	R_{\phi}=\left( \begin{array}{cc}
		\cos \phi & -\sin \phi \\	\sin\phi & \cos \phi 
	\end{array} \right); \\
	S_h^-(\theta)= \left( \begin{array}{cc} \cosh(\theta) & \sinh(\theta) \\ -\sinh(\theta) & -\cosh(\theta) \end{array} \right); \\
	S_h^+(\theta)= \left( \begin{array}{cc} -\cosh(\theta) & -\sinh(\theta) \\ \sinh(\theta) & \cosh(\theta) \end{array} \right);
	\end{eqnarray*}
	\item Calculate the determinants of the matrices defined in the previous step
	\begin{eqnarray*}
		x &:=& simplify(Determinant(I_2-tR_{\phi})\cdot Determinant(I_2-tI_2)); \\
		y &:=& simplify(Determinant(I_2-tR_{\phi})\cdot Determinant(I_2-tS_h^-(\theta)));\\
		z &:=& simplify(Determinant(I_2-tR_{\phi})\cdot Determinant(I_2-tS_h^+(\theta)));\\
		w &:=& simplify(Determinant(I_2-tR_{\phi})\cdot Determinant(I_2-tS_h^-(\theta)S_h^+(\theta)));
	\end{eqnarray*}
	\item Simplify $\Phi_{\Gamma}(t)$:
	\begin{eqnarray*}
		simplify\left(\frac{1}{x} + \frac{1}{y} + \frac{1}{z} + \frac{1}{w}\right);
	\end{eqnarray*}
	After pressing "Enter", the following expression appears
	$$\frac{4}{(t-1)^{2} (t+1)^{2} (-2 t \cos(a)+t^{2}+1)}.$$
	\item Now, just calculate the integral:
	$$\mathit{simplify}\left(\frac{1}{8\pi}\int_{0}^{2\pi}\frac{4}{(t-1)^{2} (t+1)^{2} (-2 t \cos\phi+t^{2}+1)}d\phi\right);$$
	returning the following result
	$$\frac{\mathrm{csgn}(\frac{(t+1)^{2}}{\sqrt{(t-1)^{2} (t+1)^{2}}}) \mathrm{csgn}(t^{2}-1)}{(t-1)^{3} (t+1)^{3}},$$
	where the function $csgn$ is defined by
	\begin{eqnarray*}
		csgn(\psi(t))=\begin{cases}
			1, \mbox{ if } \Re(\psi(t))>0 \mbox{ or } \Re(\psi(t))=0 \mbox{ and } \Im(\psi(t))>0 \\
			-1, \mbox{ if } \Re(\psi(t))<0 \mbox{ or } \Re(\psi(t))=0 \mbox{ and } \Im(\psi(t))<0
		\end{cases},
	\end{eqnarray*}
	also $\Re(z)$ and $\Im(z)$ is a real part and imaginary part of a complex number $z$, respectively.
	\item Finally, we calculate the series:
	$$\mathit{series}(\frac{\mathrm{csgn}(\frac{(t+1)^{2}}{\sqrt{(t-1)^{2} (t+1)^{2}}}) \mathrm{csgn}(t^{2}-1)}{(t-1)^{3} (t+1)^{3}},t,18);$$
	and from there we get the following result:
	$$1+3 t^{2}+6 t^{4}+10 t^{6}+15 t^{8}+21 t^{10}+28 t^{12}+36 t^{14}+45 t^{16}+\mathrm{O}(t^{18}).$$
\end{enumerate}

\section*{Acknowledgements}
The authors are very grateful to the anonymous referee for valuable comments and suggestions.


\bibliographystyle{ufrr}
\bibliography{references}

\end{document}